\documentclass[letterpaper, 10pt, conference]{ieeeconf}
\IEEEoverridecommandlockouts                              

\usepackage[margin=1in]{geometry}
\usepackage[english]{babel}
\usepackage[utf8]{inputenc}

\usepackage{amsmath,amssymb,amsfonts}
\usepackage{dsfont}
\usepackage{graphicx}
\usepackage[colorlinks=true, allcolors=blue]{hyperref}
\usepackage{newtxtext, newtxmath}
\usepackage{cite}
\usepackage[table]{xcolor} 
\usepackage{array}
\usepackage{float}
\newcolumntype{P}[1]{>{\centering\arraybackslash}p{#1}}

\definecolor{highlight}{rgb}{0,0,0}

\usepackage{subcaption}

\title{A Behavioral approach to Constrained Linear Quadratic Control:
  Feasibility and Optimality}
\author{Soutrik Bandyopadhyay, Debasattam Pal and Shubhendu Bhasin
\thanks{Soutrik Bandyopadhyay and Shubhendu Bhasin are with the Department of
  Electrical Engineering, IIT Delhi and Debasattam Pal is with the Department of
  Electrical Engineering, IIT Bombay. 
{\tt\small \{Soutrik.Bandyopadhyay@ee.iitd.ac.in,} {\tt\small debasattam@ee.iitb.ac.in,
  sbhasin@ee.iitd.ac.in \}}}
}

\newcommand{\Bcons}{\mathfrak{B}_{\text{cons}}}
\newcommand{\Bsys}{\mathfrak{B}_{\text{sys}}}
\newcommand{\Baug}{\mathfrak{B}_{\text{aug}}}
\newcommand{\Badj}{\mathfrak{B}_{\text{adj}}}

\newcommand{\Baut}{\mathfrak{B}_{\text{aut}}}
\newcommand{\Bcontr}{\mathfrak{B}_{\text{contr}}}

\newcommand{\numIneq}{n_i}
\newcommand{\numEq}{n_s}
\newcommand{\numInit}{n_{ic}}
\newcommand{\dimW}{q}
\newcommand{\dimL}{m}
\newcommand{\dimLambda}{\numIneq}
\newcommand{\dimV}{p}

\newcommand{\leqp}{\preccurlyeq_{\mathcal{P}}}
\newcommand{\geqp}{\succcurlyeq_{\mathcal{P}}}

\newcommand{\inprod}[2]{\left\langle #1, #2 \right\rangle}

\newcommand{\poly}[1]{\mathbb{R}[#1]}
\newcommand{\polymat}[3]{\mathbb{R}[#1]^{#2 \times #3}}
\newcommand{\mat}[2]{\mathbb{R}[\eta,\eta^{-1}]^{#1 \times #2}}
\newcommand{\bilinearmat}[2]{\mathbb{R}[\xi,\eta]^{#1 \times #2}}

\newcommand{\lau}[1]{\mathbb{R}[#1,#1^{-1}]}

\let\oldforall\forall
\renewcommand{\forall}{\; \oldforall \, }

\let\oldexist\exists
\renewcommand{\exists}{\; \oldexist \, }

\newcommand{\sig}[1]{(\mathbb{R}^{#1})^{\mathbb{Z}}}

\newtheorem{thm}{Theorem}
\newtheorem{rmark}{Remark}
\newtheorem{prb}{Problem}
\newtheorem{dfn}{Definition}

\DeclareMathOperator{\kernel}{ker}
\DeclareMathOperator{\img}{im}

\begin{document}
\setlength{\abovedisplayskip}{4pt plus 3pt minus 2pt}
\setlength{\belowdisplayskip}{4pt plus 3pt minus 2pt}
\setlength{\abovedisplayshortskip}{0pt plus 3pt}
\setlength{\belowdisplayshortskip}{4pt plus 3pt minus 2pt}
\setlength{\dbltextfloatsep}{1.0\baselineskip plus 0.2\baselineskip minus 0.3\baselineskip}
\setlength{\dblfloatsep}{0.7\baselineskip plus 0.2\baselineskip minus 0.2\baselineskip}
\setlength{\textfloatsep}{1.0\baselineskip plus 0.2\baselineskip minus 0.4\baselineskip}
\maketitle

\begin{abstract}
This paper develops a constrained linear–quadratic (LQ) optimal control framework within the behavioral approach. Unlike classical formulations that rely on state–input representations, our method treats both the system dynamics and the constraints as trajectory-level objects. The system is modeled through a kernel representation, while inequality constraints are captured using behavioral inequalities defined over infinite-dimensional convex cones. Using quadratic differential forms (QDFs) to encode performance indices, we formulate the constrained LQ problem as a convex variational optimization over trajectories. By employing Gâteaux differentials, we derive the stationarity and complementarity conditions and show that they admit a representation as an augmented behavioral system whose unique decomposition into autonomous and controllable sub-behaviors yields a complete parameterization of all optimal constrained trajectories. The resulting conditions constitute a behavioral analogue of the Karush–Kuhn–Tucker (KKT) framework. A numerical example illustrates the effect of inequality constraints on the optimal behavior. The framework provides a general, trajectory-level representation for constrained optimal control and opens avenues for closed-loop and data-driven extensions. 

\end{abstract}

\section{Introduction}%
\label{sec:intro}

A fundamental way of analyzing dynamical systems is the behavioral approach,
where a system is defined as the set of trajectories following some underlying
relation among variables. Unlike input-output descriptions, the behavioral
approach does not presuppose a partition of variables into states, inputs, or
outputs; instead, it characterizes a system directly through the laws satisfied
by its measurable trajectories. Championed by Jan C. Willems
\cite{willems1991paradigms,willems1997introduction}
in the 1980s, behavioral systems theory has gained traction in recent years in
data-driven control, signal processing, robust control, and other areas (cf.
\cite{markovsky2021behavioral} and the references therein). 

Within this framework, optimal control problems can also be formulated without
resorting to state-space representations. A key tool is the theory of quadratic
differential forms (QDFs) \cite{willems1998quadratic}, which encode performance
functionals and Lyapunov-like expressions using two-variable polynomial
matrices, thereby generalizing the quadratic functionals of linear quadratic
control and making them amenable for study under the behavioral approach
\cite{willems2005optimality,willems2005linear}. This has enabled developments
in optimal control including LQ problems for multidimensional and
singular systems
\cite{pillai1999optimal,ferrante2000linear, qais2022continuous}.  

 While behavioral formulations for unconstrained LQ control are well understood,
 the incorporation of constraints remains comparatively underdeveloped,
 especially when constraints must themselves be enforced on trajectories rather
 than on pre-assigned input or state variables. Real-world control problems
 often involve unilateral or inequality-type restrictions, such as saturation
 limits, safety constraints, physical contact constraints, or sign constraints.
While classical constrained optimal control approaches have been reasonably
successful in the literature \cite{baotic2008efficient, borrelli2003constrained,
  nevistic1996constrained,
  pytlak1999numerical,
  brogliato2009some
}, such approaches typically rely on state-input models,
making them incompatible with the behavioral viewpoint. 

Alongside these model-based approaches, data-driven methods, most notably
Data-enabled predictive control
(DeePC) \cite{coulson2019data,baros2022online}
solve constrained predictive control problems directly from data, with
extensions addressing robustness \cite{berberich2020robust}, reachability
\cite{eising2019reachability}, and invariance \cite{bisoffi2020data}.
 However, all the approaches mentioned above assume a-priori partitioning of
 states, inputs and outputs and are typically expressed in the state-space representation.

 One promising approach in describing constraints in the behavioral setting can
 be traced back to A. A. ten Dam's thesis \cite{ten1997unilaterally}, which introduced
 Behavioral Inequalities, a geometric formulation of unilateral constraints on
 trajectories. By characterizing restricted behaviors via convex cones and
 contact conditions, this line of work provided early insight into how
 inequality-type dynamical constraints can be expressed without assigning roles
 to variables, including the contact problem for linear continuous-time
 dynamical systems \cite{ten1997contact}. The feasibility of such behavioral
 inequalities had been conjectured in \cite{ten2004farkas}, but the author
 claimed that such analysis is an open problem owing to the fact that matrices
 are generally non-invertible over the polynomial ring. In parallel, there has been recent interest in convex
 processes \cite{padoan2023data,eising2019reachability,eising2021data} for describing dynamical systems with inequality
 constraints. However, to the best of our knowledge, constrained optimal control
 has not yet been fully integrated with these behavioral constraint
 formulations.  

 \textcolor{highlight}{
In our recent work \cite{bandyopadhyay2025behavioral}, static behavioral inequalities were analyzed to establish geometric feasibility conditions for unconstrained trajectory sets. The present paper builds upon \cite{bandyopadhyay2025behavioral} by extending those algebraic inequality foundations into a dynamic optimal control setting. Specifically, while \cite{bandyopadhyay2025behavioral} focuses on static feasibility checks, this work formulates variational trajectory optimization under behavioral inequality constraints, derives stationarity and complementarity conditions, and constructs an augmented behavioral KKT system for constrained LQR synthesis.
}
In this paper, we bridge this gap by developing a constrained linear quadratic
optimal control framework within the behavioral approach. The system dynamics
and the constraints are both treated as behavioral objects: the system is
described by a kernel representation, while inequality constraints are modeled
via behavioral inequalities defined over infinite-dimensional convex cones. This
enables constraints to be imposed directly on trajectories, without
distinguishing between variables, and expressed through general polynomial
matrix relations. 

Our main contribution is to show that constrained LQ optimal control can be cast
as a convex variational problem over trajectories and solved using tools from
convex optimization in infinite-dimensional vector spaces. Leveraging the
structure of QDFs and the latent-variable image representation of controllable
behaviors, we derive the stationarity and complementarity conditions through
Gâteaux differentials and construct an augmented behavioral system encoding the
optimality conditions. We show that this augmented system admits a decomposition
into autonomous and controllable sub-behaviors, enabling explicit
parameterization of all optimal solutions. This yields a behavioral analogue of
the KKT conditions and a constructive path to computing optimal constrained
trajectories, while preserving the generality of constrained optimal control
problems.
\section{Preliminaries}%
\label{sec:prelims}
For a field $k$, we denote the set of all polynomials
in the indeterminates $x_1, \dots, x_{n}$ (polynomial ring) with coefficients in $k$ as
$k[x_1,x_2, \dots, x_n]$. For example $\poly{x_1,x_2, \dots, x_{n}}$ denotes
the polynomial ring with real coefficients. Correspondingly,
$\polymat{x_1, x_2, \dots, x_{n}}{g}{q}$ denotes the set of $g \times q$
matrices defined over the polynomial ring with real coefficients. We use
$\lau{\eta}$ to denote the Laurent polynomial ring with real coefficients.

A dynamical system in the context of behavioral approach is defined as
the tuple $\Sigma \triangleq (\mathcal{T}, \mathcal{W}, \mathfrak{B})$ where
$\mathcal{T}$ is the index set, $\mathcal{W}$ is the alphabet set, and the
$\mathfrak{B}$ denotes the subset of trajectories in
$\mathcal{W}^{\mathcal{T}} \triangleq \{w: \mathcal{T} \to  \mathcal{W}\}$ that follow an underlying
relationship between the variables \cite{willems1997introduction}.
For linear time-invariant systems, these laws are usually specified by polynomial
relations in the shift operator $\sigma(\cdot)$ (for the discrete-time case) or the
differential operator $\frac{d}{dt}(\cdot)$ (for the continuous-time case). For example
in the discrete-time,
\begin{equation}
  \label{eq:linear_behavior_continuous}
  \mathfrak{B} \triangleq \{w \in \sig{\dimW} : R(\sigma)w = 0\},
\end{equation}
where $R(\eta) \in \mat{g}{\dimW}$,  $\sigma(\cdot)$
denotes the shift operator defined as
 $(\sigma w)(k) \triangleq w(k+1) \forall k \in \mathbb{Z},$
with the inverse-shift operator
 $(\sigma^{-1} w)(k) \triangleq w(k-1) \forall k \in \mathbb{Z}.$ The representation in \eqref{eq:linear_behavior_continuous} is termed as the
 kernel representation with $\mathfrak{B} = \kernel{R(\sigma)}$. 
Without loss of generality, the matrix $R(\cdot)$ can be written as
$  R(\eta) \triangleq \sum_{i=L_1}^{L_2} R_{i} \eta^{i},$
where $R_{i} \in \mathbb{R}^{g \times \dimW} \forall i$ and $L_1, L_2 \in \mathbb{Z}$
with $L_1 \leq L_2$. 
Note that the Laurent polynomial ring is the natural choice for the operator
ring for behaviors with $\mathcal{T} = \mathbb{Z}$ \cite{willems1991paradigms}.


Two behaviors $\mathfrak{B}_1$ and
$\mathfrak{B}_2$ are said to be equivalent if
$\mathfrak{B}_1 = \mathfrak{B}_2$. In the kernel representation, two behaviors
are equivalent if and only if
$  \{w : R_1(\sigma)w = 0\} = \{w: R_2(\sigma)w = 0\} \iff R_1(\eta) = U(\eta)R_2(\eta),$
for some unimodular polynomial matrix $U(\eta) \in \mat{g}{g}$
\cite{willems1997introduction}. Specifically, a square polynomial matrix $U(\eta) \in \mat{g}{g}$ is said to
be unimodular if $\det{U(\eta)} \in \mathbb{R} \backslash \{0\}$ (i.e., the determinant is a
non-zero constant).

A behavior $\mathfrak{B}$ is said to be \emph{controllable} if there
exists an equivalent image representation \cite{fagnani1995image},
such that
$  w(k) = M(\sigma) \ell(k) \forall k \in \mathbb{Z}, $
where $M \in \mat{\dimW}{\dimL}$; $\ell \in \sig{\dimL}$ is termed as the latent
variable. In other words $\mathfrak{B} = \img{M(\sigma)}$.

Consider the symmetric polynomial matrix $\phi(\xi,\eta) \in \bilinearmat{q_1}{q_2}$ in commuting indeterminates $\xi$ and $\eta$ as
\begin{equation}
  \phi(\xi,\eta) = \sum_{r,s \in \mathbb{Z}} \phi_{rs} \xi^r \eta^s, 
  \label{eq:qdf_matrix_defn}
\end{equation}
where $\phi_{rs} \in \mathbb{R}^{q_1 \times q_2}$ for $r,s \in \mathbb{Z}$.
We assume that the sum in \eqref{eq:qdf_matrix_defn} exists and is finite.
The matrix $\phi$ induces a bilinear form
$L_\phi : \sig{q_1} \times \sig{q_2} \to \sig{}$ defined by
\begin{equation}
  (L_\phi(w_1,w_2))(k) \triangleq \sum_{r,s \in \mathbb{Z}} w_1(k+r)^{\mathrm{T}} \phi_{rs} w_2(k+s),
\end{equation}
for all $k \in \mathbb{Z}$.
Subsequently, we also define the corresponding quadratic form
$Q_\phi: \sig{\dimW} \to \sig{}$ as
$  Q_\phi(w) \triangleq L_\phi(w,w).$
The QDF $Q_\phi $ is said to be non-negative (denoted by $Q_\phi \geq 0 $) if
$(Q_\phi(w))(k) \geq 0 \forall k \in \mathbb{Z}$.

The trajectory set $\sig{\numIneq}$ is a real vector space, and we denote by
$\inprod{\cdot}{\cdot}$ the map 
\begin{equation}
  \label{eq:inprod_defn}
  y_1, y_2 \in \sig{\numIneq} \mapsto \inprod{y_1}{y_2} \triangleq \sum_{k \in \mathbb{Z}} y_1(k)^{\mathrm{T}} y_2(k).
\end{equation}
The infinite sum in \eqref{eq:inprod_defn} may not be well defined for
arbitrary $y_1, y_2 \in \sig{\numIneq}$; we therefore restrict our attention to
the subset $\mathcal{Y} \subset \sig{\numIneq}$ for which it is well-defined. In
particular, for finite-support trajectories the sum is finite and
$\inprod{\cdot }{\cdot }$ defines an inner product for the pre-Hilbert space
$\mathcal{Y}$. The induced norm $\|y\| \triangleq \sqrt{\inprod{y}{y}} \forall
y \in \mathcal{Y}$ and its completion with respect to $\| \cdot  \|_{\inprod{\cdot}{\cdot}}$
then yield a Hilbert space.
 
Using this definition of inner product we can define,
without loss of generality, the pointed convex cone   
$\mathcal{P} \triangleq \{w \in \sig{\dimW} | w(k) \geq 0 \forall k \in \mathbb{Z}\}$,
which would be used in the later sections for the definition of convex
constraint sets. It can be shown that the cone $\mathcal{P}$ satisfies 
\begin{equation}
  \label{eq:cone_restriction}
  \inprod{x}{y} \geq 0, 
  \quad \forall x, y \in \mathcal{P}.
\end{equation}
Such cones are said to be \emph{self-dual} with respect to the inner product,
and this property will be used in the proof of Theorem~\ref{thm:feasibility}.
Using the convex cone $\mathcal{P}$, we can define the ordering 
``$\leqp$'' by
$x_1 \leqp x_2 \iff  x_2 - x_1 \in \mathcal{P}$
\cite{luenberger1997optimization}. We term the convex cone $\mathcal{P}$ as the \textit{positive cone}. The pointedness assumption (i.e., $\mathcal{P} \cap (-\mathcal{P}) = \{0\}$) ensures that the induced relation defines a proper partial order.

\section{Constraints in the Behavioral approach}%
\label{sec:constraints}

Inspired by the developments in \cite{ten1997unilaterally}, we now introduce a general class of behavioral
models defined by inequalities. 

\begin{dfn}[Behavioral Inequalities]
  \label{defn:behavioral_ineq}
Consider the universum $\mathbb{U} \triangleq \mathcal{W}^{\mathcal{T}}$ for the index set $\mathcal{T}$ and the
alphabet set $\mathcal{W}$, a vector space $\mathbb{E}$ and
$f: \mathbb{U}\to \mathbb{E}$. The behavioral model $(\mathbb{U},\mathfrak{B})$
with $\mathfrak{B} \triangleq \{w \in \mathbb{U} :f(w) \leqp 0 \}$ is called a
behavioral inequality, where the ordering ``$\leqp$'' is with respect to the
positive cone $\mathcal{P} \subseteq \mathbb{E}$.
\end{dfn}

Definition \ref{defn:behavioral_ineq} admits a wide variety of mathematical
models of practical significance. See
\cite{ten1997unilaterally,padoan2023data,bandyopadhyay2025behavioral} for some
examples.
In general, we may write a system of temporal inequalities with constant coefficients as
\begin{equation}
  \label{eq:general_ineq}
 H(\sigma)w \leqp g,
\end{equation}
where $w \in \sig{\dimW}$, $H\in \mat{\numIneq}{\dimW}$ and
$g \in \sig{\numIneq}$. We now define the
behavioral inequality constraints $\Bcons$ as
\begin{equation}
  \label{eq:behav_defn}
 \Bcons \triangleq \{w \in \sig{\dimW} | H(\sigma)w \leqp g \}.
\end{equation}
In other words, a trajectory $w$ is said to be a solution (if any) to the behavioral inequality
\eqref{eq:general_ineq} if the transformation $k = H(\sigma)w$ satisfies
$k \leqp g$ (i.e., $g - k \in \mathcal{P}$) \cite{bandyopadhyay2025behavioral}.

\subsection{Feasibility of Behavioral Inequalities}%

We begin by characterizing when a behavioral inequality admits a non-empty set
of trajectories.
To this end, we employ a generalized theorem of alternatives for
infinite-dimensional spaces, namely the Generalized Farkas’ Lemma
\cite{clark2006necessary}, adapted here to the behavioral setting.

\begin{thm}[Feasibility]
  \label{thm:feasibility}
 Given the behavioral inequality defined in \eqref{eq:behav_defn}
 , define the
 corresponding adjoint behavior
\begin{equation*}
  \Badj \triangleq \{y \in \sig{\numIneq} | H^{\mathrm{T}}(\sigma^{-1})y = 0  \} = \kernel{H^{\mathrm{T}}(\sigma^{-1})}.
\end{equation*}
Then
\textcolor{highlight}{the behavioral inequality constraint set $\mathfrak{B}_{\mathrm{cons}}$ is non-empty ($\mathfrak{B}_{\mathrm{cons}} \neq \emptyset$)}
if and only if
there does not exist a trajectory $y \in  \Badj$ such that
$
y \geqp 0 \quad \text{and} \quad
\inprod{y}{g} < 0.
$
In other words,
$  \Bcons \ne \emptyset
  \iff  
  \inprod{y}{g} \geq 0 \forall y \in \Badj \cap \mathcal{P}.$

\begin{proof}
The proof consists of two parts.

\begin{enumerate}
\item \emph{Necessity} 

        Assume that $\Bcons \ne \emptyset$. Thus there exists $w \in \sig{\dimW}$ such that
        $
        H(\sigma) w \leqp g.
        $
        Let $y \in \Badj \triangleq \kernel{H^{\mathrm{T}}(\sigma^{-1})}$  such that $y \geq 0$. Using \eqref{eq:cone_restriction}
        and taking the inner product yields 
        $
            \inprod{y}{H(\sigma)w} \leq \inprod{y}{g}.
        $  
        Since $y \in \kernel{H^{\mathrm{T}}(\sigma^{-1})} = \kernel{H^*(\sigma)}$, we have
        $
            \inprod{y}{H(\sigma)w} = \inprod{H^*(\sigma)y}{w} = 0,
        $  
        which implies $\inprod{y}{g} \geq 0$. Thus, there cannot exist
        $y \in  \Badj$ with $y \geqp 0$  and $\inprod{y}{g} < 0$.

  \item \emph{Sufficiency}

        Now let $\inprod{y}{g}\geq  0 \forall  y \in \Badj \cap \mathcal{P}$. Assume, for
        contradiction, $\Bcons = \emptyset$. Thus $\forall w \in  \sig{\dimW}$,
        $
        H(\sigma) w \succ_{\mathcal{P}} g.
        $
        Taking inner product with $y \forall y \in \Badj \cap \mathcal{P}$ and
        utilizing \eqref{eq:cone_restriction}, we have
        $
        \inprod{y}{H(\sigma)w} > \inprod{y}{g},
        $
        which can be rewritten using the adjoint operator as
        $
        \inprod{H^{*}(\sigma)y}{w} = 0 > \inprod{y}{g},
        $
        which contradicts $\inprod{y}{g}\geq 0$. Thus $\Bcons \neq \emptyset$.
\end{enumerate}
\end{proof}
\end{thm}
Using Theorem \ref{thm:feasibility}, we can check for the existence of solutions
for a given behavioral inequality. For the want of space, we refer the readers to the
prequel \cite{bandyopadhyay2025behavioral} for worked out examples.

\begin{rmark}
Contrary to the conjecture in \cite{ten2004farkas}, the proof of Theorem
\ref{thm:feasibility} doesn't rely on the invertibility of operators in $\mat{\numIneq}{\dimW}$,
but instead exploits the adjoint operator, which lies in the Laurent
polynomial ring.
\end{rmark}




\begin{rmark}
When $H(\cdot) \in \mat{\numIneq}{\dimW}$ is a fat matrix ($\numIneq < \dimW$),
feasibility can often be verified more easily using Theorem
\ref{thm:feasibility}, since the dual condition involves the adjoint behavior
$\Badj = \kernel{H^{\mathrm{T}}(\sigma^{-1})}$, which is then finite
dimensional. The feasibility check can thus be carried out over a
finite-dimensional space, typically reducing the complexity relative to
working directly with the primal inequality description.
\end{rmark}

\section{Constrained Optimal Control using Behavioral approach}%
\label{sec:lq}

We now extend the classical Linear Quadratic optimal control framework to the
behavioral context, wherein the constraints and objectives are imposed on
trajectories rather than partitioned as state, input or output variables.
Consider an LTI system described in the minimal kernel representation as

\begin{equation}
  \Bsys = \{w \in \sig{\dimW} :R(\sigma) w = 0 \},
\label{eq:system}
\end{equation}
where, $R \in \mat{\numEq}{\dimW}$. The admissible trajectories are further
restricted by behavioral inequality constraints of the form
$\Bcons \triangleq \{ w \in \sig{\dimW} :H(\sigma)w \leqp g\}$,
where $H \in \mat{\numIneq}{\dimW}$, $g \in \sig{\numIneq}$,
and the inequality is with respect to a convex cone
$\mathcal{P} \subseteq \sig{\numIneq}$ with non-empty interior. We assume that the
constrained set of trajectories has a non-empty interior. 

\noindent We encode the initial conditions for the constrained optimal control problem as
$  I(\sigma) w(0) = a, $
where $I \in \mat{\numInit}{\dimW}$ and $a \in \mathbb{R}^{\numInit}$. Without loss
of generality, we assume that the rows of $I$ are linearly independent over
$\mathbb{R}$, and that the initial conditions admit a set of feasible
trajectories satisfying the system dynamics. 

Let $\phi(\xi,\eta) \in \bilinearmat{\dimW}{\dimW}$ be a user-defined symmetric
matrix such that the induced QDF $Q_\phi(w)$ is non-negative. We now formulate the
constrained optimal control problem as follows

\begin{prb}
  \label{prob:ProblemMain}
  \begin{subequations}
    \label{eq:behavioral_problem}
\begin{align}
\min_{w \in \sig{\dimW}} \hspace{9pt}& \sum_{k \in \mathbb{Z}} Q_{\phi}(w(k))  \label{eq:cost_functional}, \\
  \text{subject to} \hspace{10pt} & w \in \Bsys \cap \Bcons, \\
  & I(\sigma) w(0) = a.
\end{align}
\end{subequations}
\end{prb}

To proceed, we assume that the system in \eqref{eq:system} is controllable,
i.e., there exists
$M \in \mat{\dimW}{\dimL}$ such that \cite{willems2008representations}
$w = M(\sigma) \ell$,
for some latent variable
$\ell \in \sig{\dimL}$. Substituting into \eqref{eq:behavioral_problem}, the problem reduces to
\begin{subequations}
  \label{eq:image_problem}
\begin{align}
\min_{\ell \in \sig{\dimL}} \hspace{8pt} & J(\ell) \triangleq \sum_{k \in  \mathbb{Z}} Q_{\phi'}(\ell(k)), \label{eq:objective_image}\\
  \text{subject to} \hspace{8pt} & H'(\sigma)\ell \leqp g , \label{eq:constraint_image} \\
  & I'(\sigma) \ell(0) = a, \label{eq:initial_cond_image}
\end{align}
\end{subequations}
where the QDF $Q_{\phi'}(\cdot)$ is defined using the polynomial matrix
$\phi' \in \bilinearmat{\dimL}{\dimL}$ $\triangleq $ $ M(\xi)^{\mathrm{T}}\phi(\xi,\eta) M(\eta)$,
$H'(\eta) \in \mat{\numIneq}{\dimL}  \triangleq H(\eta)M(\eta)$ and
$I'(\eta) \in \mat{\numInit}{\dimL} \triangleq I(\eta)M(\eta)$. It can be shown that
the polynomial matrix $\phi'$ is symmetric and the induced QDF $Q_{\phi'}(\cdot)$ is
non-negative.


We now provide a necessary and sufficient condition for the optimality of the
constrained optimal control in the behavioral approach.


\begin{thm}[Optimality]
  \label{thm:optimality}
  Consider the constrained optimal control problem in \eqref{eq:image_problem},
  and suppose the following assumptions hold
  \begin{enumerate}
    \item The induced QDF $Q_{\phi'}$ is non-negative, i.e.,
          $Q_{\phi'}(\ell) \geq 0 \forall \ell \in \sig{\dimL}$.
    \item The feasible set
          $\mathcal{F} \triangleq \{\ell \in \sig{\dimL} | H'(\sigma)\ell \leqp g, I'(\sigma)\ell(0) = a\}$ 
          has a non-empty relative interior.
  \end{enumerate}
  Then a trajectory $\ell^* \in \mathcal{F}$ is optimal for Problem
  \ref{prob:ProblemMain}, iff there exists a multiplier trajectory
  $\lambda \in \sig{\dimLambda}, \; \lambda \geqp 0$ such that the following conditions hold
        \begin{equation}
        2 \phi'(\sigma^{-1},\sigma) \ell^* + H'(\sigma^{-1}) ^{\mathrm{T}} \lambda = 0, \label{eq:stationarity} 
        \end{equation}
\begin{equation}
  \inprod{\lambda }{H'(\sigma) \ell^* - g} = 0.
  \label{eq:complementarity}
\end{equation}
\end{thm}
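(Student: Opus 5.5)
The plan is to treat \eqref{eq:image_problem} as a convex program on the vector space $\sig{\dimL}$, with the inner product \eqref{eq:inprod_defn} restricted to trajectories for which the sums converge. The Lagrangian is
\[
\mathcal{L}(\ell,\lambda) \triangleq J(\ell) + \inprod{\lambda}{H'(\sigma)\ell - g}, \qquad \lambda \geqp 0 .
\]
The proof then has two parts: the general Lagrange multiplier theorem for convex problems with cone constraints (Luenberger \cite{luenberger1997optimization}) gives necessity, and the sufficiency direction is verified by hand.

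\textbf{Convexity and the G\^ateaux differential.} Since $Q_{\phi'}\geq 0$, $J$ is a convex functional. The map $\ell \mapsto H'(\sigma)\ell - g$ is affine, $\mathcal{P}$ is a convex cone, and the initial condition $I'(\sigma)\ell(0)=a$ is affine, so $\mathcal{F}$ is convex. Next I compute the G\^ateaux differential of $J$ at $\ell$ in a direction $h$ of finite support. Using symmetry of $\phi'$ and bilinearity of $L_{\phi'}$,
\[
\delta J(\ell;h) = 2\sum_k L_{\phi'}(\ell,h)(k) = 2\sum_{k}\sum_{r,s}\ell(k+r)^{\mathrm{T}}\phi'_{rs}h(k+s).
\]
Shifting the summation index $k\mapsto k-s$ rewrites this as $\inprod{2\phi'(\sigma^{-1},\sigma)\ell}{h}$. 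This is the adjoint manipulation already used in Theorem~\ref{thm:feasibility}. In the same way, $\inprod{\lambda}{H'(\sigma)h} = \inprod{H'(\sigma^{-1})^{\mathrm{T}}\lambda}{h}$.

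\textbf{Sufficiency.} Suppose $\lambda\geqp 0$ satisfies \eqref{eq:stationarity} and \eqref{eq:complementarity}, and let $\ell\in\mathcal{F}$ be arbitrary. Convexity of $J$ gives $J(\ell)\geq J(\ell^*)+\delta J(\ell^*;\ell-\ell^*)$. By \eqref{eq:stationarity} and the adjoint identity,
\[
\delta J(\ell^*;\ell-\ell^*) = -\inprod{\lambda}{H'(\sigma)(\ell-\ell^*)} .
\]
By \eqref{eq:complementarity} this equals $-\inprod{\lambda}{H'(\sigma)\ell-g}$. That quantity is $\geq 0$ by \eqref{eq:cone_restriction}, because $\lambda\in\mathcal{P}$ and $g-H'(\sigma)\ell\in\mathcal{P}$. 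Hence $J(\ell)\geq J(\ell^*)$, and $\ell^*$ is optimal.

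\textbf{Necessity.} This is the main obstacle. I would invoke Luenberger's Lagrange duality theorem for convex programs with cone constraints, with assumption 2 serving as the Slater (interior point) condition. It yields $\lambda\geqp 0$ such that $\ell^*$ minimizes $\mathcal{L}(\cdot,\lambda)$ over the affine set $\{I'(\sigma)\ell(0)=a\}$, and such that $\inprod{\lambda}{H'(\sigma)\ell^*-g}=0$; the latter is exactly \eqref{eq:complementarity}. Stationarity of $\mathcal{L}(\cdot,\lambda)$ with respect to all admissible directions $h$ then gives $\inprod{2\phi'(\sigma^{-1},\sigma)\ell^*+H'(\sigma^{-1})^{\mathrm{T}}\lambda}{h}=0$. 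Testing against unit impulses $h=e_j\delta_k$ shows that the trajectory $2\phi'(\sigma^{-1},\sigma)\ell^*+H'(\sigma^{-1})^{\mathrm{T}}\lambda$ vanishes pointwise, which is \eqref{eq:stationarity}.

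The delicate points in the necessity direction are the following:
\begin{itemize}
\item The multiplier must lie in the dual cone $\mathcal{P}^{\oplus}$. Self-duality of $\mathcal{P}$ under $\inprod{\cdot}{\cdot}$ then identifies it with a trajectory $\lambda\geqp 0$.
\item Slater requires an interior point, and $\mathcal{P}$ has none in the product topology. I would work in the Hilbert completion described in the preliminaries and use the non-empty (relative) interior hypothesis directly.
\item The initial condition constrains only finitely many values. Directions $h$ supported away from the constrained indices are then free, which gives \eqref{eq:stationarity} at those times. At the remaining finitely many times I would absorb a finite-dimensional multiplier. If stationarity is to hold exactly as stated, I would argue that this multiplier can be folded into $\lambda$ or is zero; otherwise the statement needs a correction term $I'(\sigma^{-1})^{\mathrm{T}}\mu\,\delta_0$.
\end{itemize}
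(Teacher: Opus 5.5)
\textbf{Overall.} Your route matches the paper's: the same Lagrangian $\mathcal{L}(\ell,\lambda)=J(\ell)+\inprod{\lambda}{H'(\sigma)\ell-g}$, the G\^ateaux differential obtained by reindexing the sum, Luenberger's multiplier theorem, and testing against arbitrary directions. Your hand-proved sufficiency direction (convexity of $J$ plus $\inprod{\lambda}{g-H'(\sigma)\ell}\geq 0$) is correct. It also supplies something the paper skips, since \cite[Theorem 9.4.1]{luenberger1997optimization} gives necessity only.

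\textbf{The gap in necessity.} It is the one you flagged in your last bullet but left open, and neither of your escape routes works. The multiplier $\mu\in\mathbb{R}^{\numInit}$ of $(I'(\sigma)\ell)(0)=a$ cannot be absorbed into $\lambda$. That variable enters only through $H'(\sigma^{-1})^{\mathrm{T}}\lambda$, must satisfy $\lambda\geqp 0$, and is pinned down by complementarity. Nor is $\mu$ zero in general. Take $\dimL=\numIneq=\numInit=1$, $\phi'=1$, $I'=1$, $a=1$, $H'=1$, and $g(k)=2$ for all $k$; this is an instance of Problem~\ref{prob:ProblemMain} with $\Bsys=\sig{1}$, $M=1$, $\phi=1$.
\begin{itemize}
\item Since $J(\ell)\geq\ell(0)^2=1$, the trajectory $\ell^*=\delta_0$ is optimal.
\item $\delta_0$ is a relative interior point of $\mathcal{F}$ in the sup norm or the square-summable norm.
\item Since $H'(\sigma)\ell^*-g<0$ pointwise, complementarity with $\lambda\geqp 0$ forces $\lambda=0$.
\item Then \eqref{eq:stationarity} demands $\ell^*=0$, contradicting $\ell^*(0)=1$.
\end{itemize}
So the ``only if'' is false as stated.

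\textbf{What you can actually prove.} Your own argument does Lagrange duality over the affine set $\{(I'(\sigma)\ell)(0)=a\}$, then tests against finitely supported $h$ with $(I'(\sigma)h)(0)=0$. This yields $2\phi'(\sigma^{-1},\sigma)\ell^*+H'(\sigma^{-1})^{\mathrm{T}}\lambda+I'(\sigma^{-1})^{\mathrm{T}}(\mu\,\delta_0)=0$ with $\mu$ free in sign. Your sufficiency proof survives this extra term, because $\inprod{I'(\sigma^{-1})^{\mathrm{T}}(\mu\,\delta_0)}{\ell-\ell^*}=\mu^{\mathrm{T}}(I'(\sigma)(\ell-\ell^*))(0)=0$ for feasible $\ell$. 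Commit to this corrected statement rather than hedging. The paper's own proof has the same hole: it lets $\delta\ell$ be arbitrary in a Lagrangian that omits the initial condition, although admissible variations must satisfy $(I'(\sigma)\delta\ell)(0)=0$.

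\textbf{Secondary point: the interior-point hypothesis.} Your remedy of passing to the Hilbert completion does not work. The pointwise non-negative cone has empty interior among square-summable sequences too. For such $x$ one has $x(k)\to 0$, so subtracting $\epsilon$ at a far-out coordinate leaves the cone after moving only distance $\epsilon$. Luenberger's hypothesis (a positive cone with non-empty interior in the constraint space) therefore fails there. You have two options:
\begin{itemize}
\item Use bounded sequences with the sup norm as the constraint space. There $\mathcal{P}$ has interior, but the multiplier is a priori only a positive functional on that space, and you must still show it is represented by a trajectory in $\sig{\numIneq}$.
\item Use a different constraint qualification or a finite-horizon truncation argument.
\end{itemize}
The paper's blanket assumption that $\mathcal{P}$ has non-empty interior hides the same issue.
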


\begin{proof}
  It can be shown that the problem in \eqref{eq:image_problem} is an 
 infinite dimensional convex optimization problem \cite{luenberger1997optimization}.
  We define the Lagrangian as
  $\mathcal{L}(\ell, \lambda) \triangleq J(\ell) + \inprod{\lambda }{H'(\sigma)\ell - g}$.
  where  $\lambda \in \sig{\numIneq}, \lambda \geqp 0$.
  It can be shown that the G\^{a}teaux differential of $\mathcal{L}(\cdot)$ at $\ell^*$
  for arbitrary $\delta \ell \in \sig{\dimL}$ is 
\begin{equation}
  \delta \mathcal{L}(\cdot) = \inprod{2\phi'(\sigma^{-1},\sigma)\ell^*}{\delta \ell} + \inprod{H'(\sigma^{-1})^{\mathrm{T}} \lambda }{\delta \ell}.
  \label{eq:partial_Lagrangian_behav}
\end{equation}
  Applying the Generalized Karush-Kuhn-Tucker Theorem \cite[Theorem
  9.4.1]{luenberger1997optimization}, the optimality of $\ell^*$ is equivalent
  to the existence of $\lambda \geqp 0$ such that 
$\delta \mathcal{L}(\ell^*,\lambda; \delta \ell) = 0$,
$\inprod{\lambda}{H'(\sigma)\ell^* - g} = 0$.
Using \eqref{eq:partial_Lagrangian_behav} and letting 
 $\delta \ell$ be arbitrary, we have
 $
        2 \phi'(\sigma^{-1},\sigma) \ell^* + H'(\sigma^{-1}) ^{\mathrm{T}} \lambda = 0,
$
which completes the proof.
\end{proof}


The stationarity condition in \eqref{eq:stationarity}, admits a
kernel representation of an augmented system with the behavior
$\Baug \triangleq \kernel{R_{\text{aug}}(\sigma)}$,
where, $R_{\text{aug}}(\eta) \in \mat{\dimL}{(\dimL+\numIneq)}$ defined as
$R_{\text{aug}}(\eta) \triangleq \begin{bmatrix}
    2\phi'(\eta^{-1},\eta) & H'(\eta^{-1})^{\mathrm{T}}
  \end{bmatrix}.$

If $R_{\text{aug}}(\eta)$ is full row rank, then there exist subbehaviors
$\Baut$ and $\Bcontr$ of
$\Baug$ such that
$  \Baug = \Baut \oplus \Bcontr,$
where $\Baut$ is autonomous and
$\Bcontr$ is controllable
\cite[Theorem 5.2.14]{willems1997introduction}, and the controllable
subbehavior is unique (cf. the Smith-Normal form based decomposition in
\cite{willems1997introduction}). Without loss of generality, we can write the subbehaviors $\Baut$ and $\Bcontr$
in the kernel representation as 
   $\Baut = \kernel{R_{\text{aut}}(\sigma)}, \quad
   \Bcontr = \kernel{R_{\text{contr}}(\sigma)} $
where
$R_{\text{aut}}, R_{\text{contr}} \in \mat{\bullet}{(\dimL+\dimLambda)}$ are
polynomial matrices of appropriate dimensions. In the special case where $\Baug$ is
controllable, we have $\Baug = \Bcontr$ and $\Baut = \{0\}$.

We now study the subbehaviors individually. The controllable subbehavior $\Bcontr$ admits an image
representation with the driving variable $v \in \sig{\dimV}$ such that,
$\Bcontr = \img{N(\sigma)},$
for some polynomial matrix $N(\eta) \in \mat{(\dimL+\dimLambda)}{\dimV}$. 
We decompose $N(\eta)$ into top and bottom block rows corresponding to the latent variable $\ell$ and dual multiplier variable $\lambda$, respectively:
\begin{equation}
\begin{aligned}
  \ell_{\text{contr}}(k) &\triangleq N_1(\sigma) v(k), \\
  \lambda_{\text{contr}}(k) &\triangleq N_2(\sigma) v(k) \forall k \in  \mathbb{Z}, 
\end{aligned}
\end{equation}
where, $N_1(\eta) \in \mat{\dimL}{\dimV}$ and
$N_2(\eta) \in \mat{\dimLambda}{\dimV}$. Additionally, the autonomous
subbehavior $\Baut$ consists of trajectories of the form
\cite[Theorem 3.2.16]{willems1997introduction}
\begin{equation}
  \begin{bmatrix}
    \ell_{\text{aut}}(k) \\
    \lambda_{\text{aut}}(k)
  \end{bmatrix} = \sum_{i=1}^{N} \sum_{j=0}^{\nu_i - 1}  k^j \mu_i^k B_{ij} \triangleq \begin{bmatrix}
    \ell_0(k) \\
    \lambda_0(k)
  \end{bmatrix}
 \forall k \in \mathbb{Z},
\end{equation}
where $\mu_i \in \mathbb{C}$, $i = 1,2, \dots, N$ are the distinct roots of
$\text{det} (R_{\text{aut}}(\eta)) = 0$ with multiplicities $\nu_i$ and the
constants $B_{ij} \in \mathbb{R}^{\dimL+ \dimLambda} \forall i,j $ are
determined by the initial conditions of $\ell$ and $\lambda$.

Consequently, trajectories in $\Baug$ admit the decomposition 
\begin{equation}
  \label{eq:landlambda_param}
  \begin{aligned}
    \ell(k) &= N_1(\sigma) v(k) + \ell_0(k), \\
    \lambda(k) &= N_2(\sigma) v(k) + \lambda_0(k) \forall k \in \mathbb{Z}.
\end{aligned}
\end{equation}
Substituting this decomposition into the complementarity condition in \eqref{eq:complementarity} yields 
\begin{equation}
  \label{eq:big_complementary}
\inprod{N_2(\sigma)v + \lambda_0}{H'(\sigma) N_1(\sigma)v + H'(\sigma)\ell_0 - g} = 0,
\end{equation}
with the dual variable satisfying 
$N_2(\sigma) v + \lambda_0 \geqp 0.$
Without loss of generality we can rewrite the above inequality as
\begin{equation}
  \label{eq:slack_variable_equation}
 \begin{bmatrix}
 N_2(\sigma) &  - \mathbb{I}_{\dimLambda}
 \end{bmatrix}
  \begin{bmatrix}
    v \\
    s
  \end{bmatrix}
  = -\lambda_0, \quad s \geqp 0,
\end{equation}
where $s \in \sig{\dimLambda}$ and
$\mathbb{I}_{\dimLambda}$ is the identity matrix of size
$\dimLambda \times \dimLambda$. This parameterization simplifies the
complementarity condition in terms of the free variable $v$ and the non-negative
slack variable $s$; consequently, \eqref{eq:big_complementary} becomes quadratic
in the decision variable and may be solved online using quadratic solvers to
yield the optimal control solution. The equations \eqref{eq:big_complementary}
and \eqref{eq:slack_variable_equation} are algebraic in $v$ and $s$, so for each
time step $k$ we obtain $v(k)$ and $s(k)$. Subsequently, we compute the latent
variable $\ell(k)$ and the dual variable $\lambda(k)$ using
\eqref{eq:landlambda_param}, which then yield the optimal manifest variable
$w(k) = M(\sigma)\ell(k)$.

\textcolor{highlight}{
\begin{rmark}
The numerical framework is implemented in two stages. First, SageMath
\cite{sagemath} symbolically computes the image representation $M(\sigma)$,
constructs $R_{\text{aug}}(\sigma)$, and isolates the controllable sub-behavior to
derive $N_1(\sigma)$ and $N_2(\sigma)$. The infinite-horizon problem is then
truncated to a finite horizon $k \in [0, N]$, and the shift operators
$N_1(\sigma)$, $N_2(\sigma)$, and $H'(\sigma)$ are lifted into block Toeplitz
matrices, converting
\eqref{eq:big_complementary}--\eqref{eq:slack_variable_equation} into a
finite-dimensional convex QP with $s \ge 0$, solved via CVXPY \cite{cvxpy}. 
\end{rmark}
}

\section{Example}%
\label{sec:sim}
\begin{figure*}[!t]
  \centering
  \begin{subfigure}[t]{0.48\textwidth}
    \centering
    \includegraphics[width=0.68\textwidth]{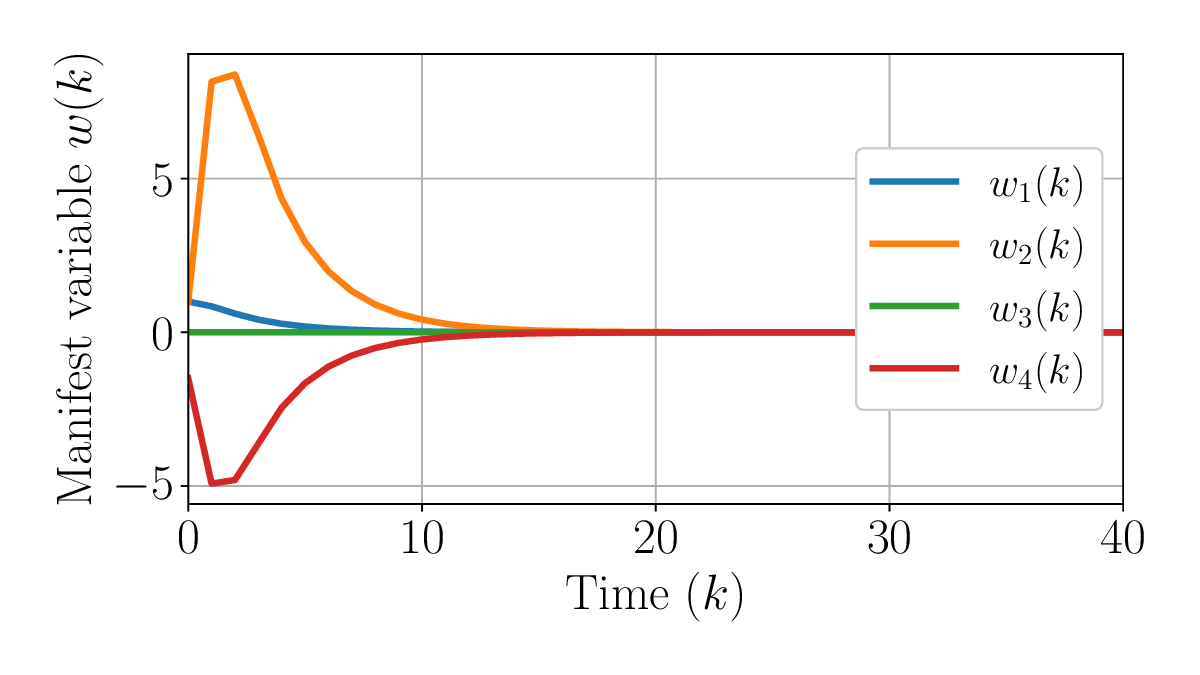}
    \caption{System trajectory of the constrained optimal control problem.}
    \label{fig:w_const}
  \end{subfigure}
  \hfill
  \begin{subfigure}[t]{0.48\textwidth}
    \centering
    \includegraphics[width=0.68\textwidth]{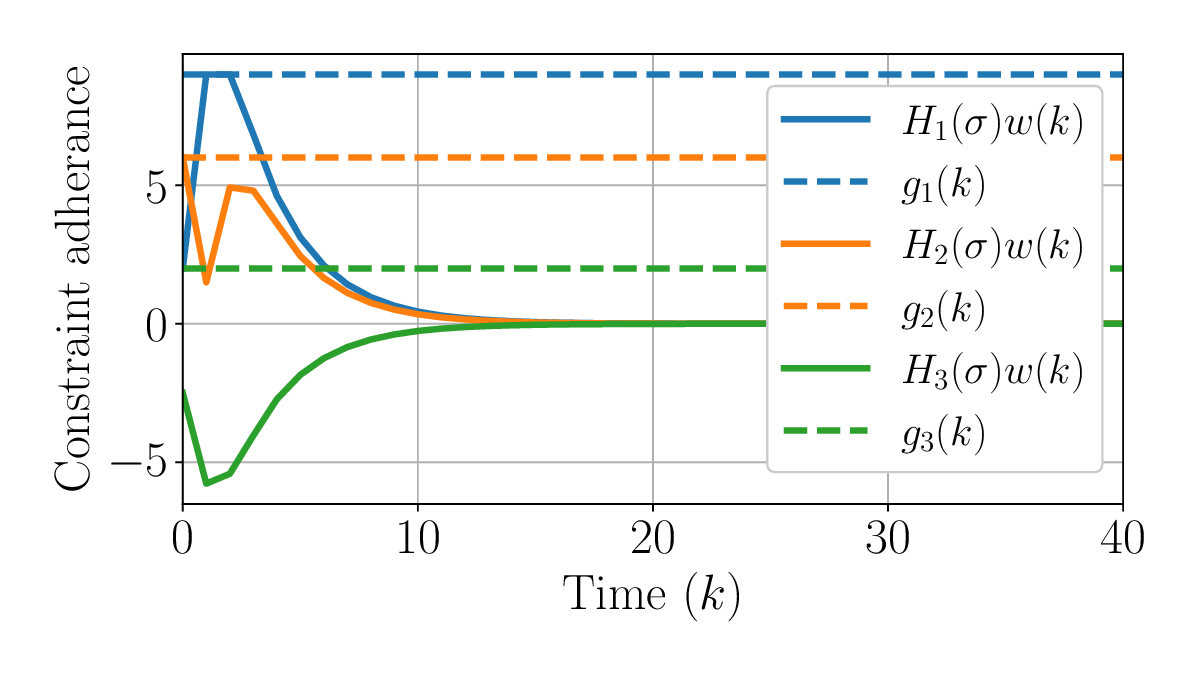}
    \caption{Inequality constraints: solid lines are the left-hand sides, dashed lines the bounds.}
    \label{fig:constraint_adherance}
  \end{subfigure}

  \begin{subfigure}[b]{0.48\textwidth}
    \centering
    \includegraphics[width=0.68\textwidth]{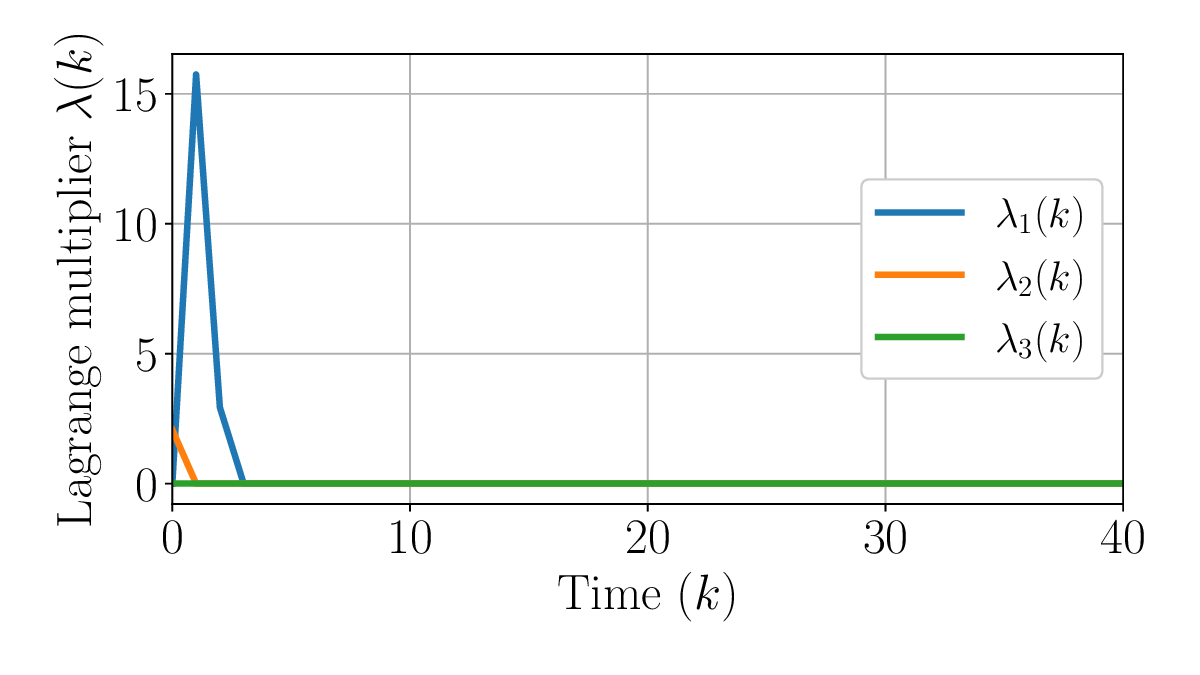}
    \caption{Optimal Lagrange multiplier trajectory of the constraints.}
    \label{fig:lambda}
  \end{subfigure}
  \hfill
  \begin{subfigure}[b]{0.48\textwidth}
    \centering
    \includegraphics[width=0.68\textwidth]{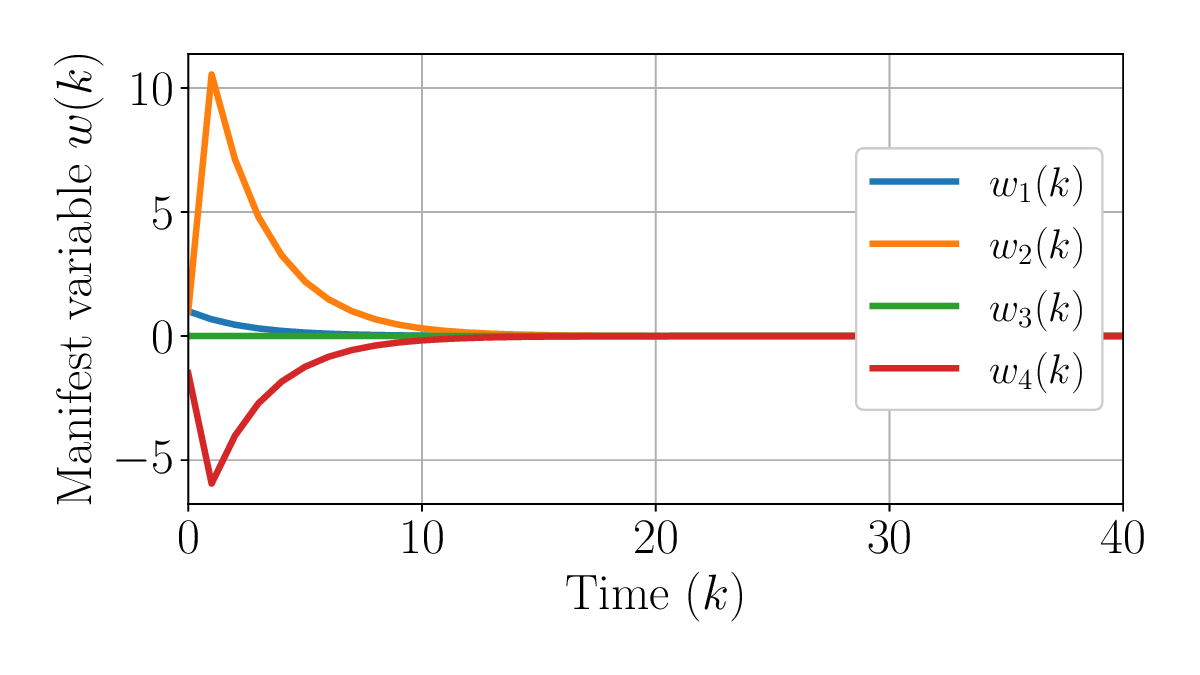}
    \caption{System trajectory under the unconstrained optimal control problem.}
    \label{fig:w_unc}
  \end{subfigure}

  \caption{Constrained Optimal Control in the Behavioral approach}
  \label{fig:overall_system_analysis}
\end{figure*}

\begin{table*}[ht]
\caption{Comparison of unconstrained and constrained costs.}
\label{tab:cost_comparison}
\centering
\rowcolors{2}{gray!15}{white} 
\renewcommand{\arraystretch}{0.78}
\resizebox{\linewidth}{!}{
  \begin{tabular}{|P{4cm}|P{4cm}|P{4cm}|P{4cm}|}
\hline
\textbf{Initial Condition ($a$)} & \textbf{Unconstrained Cost} &
                                                                 \textbf{Constrained Cost} & \textbf{Sub-optimality Gap} \\
\hline
$[1,1]^{\mathrm{T}}$ & \textbf{126.56} & 142.40 & 12.5\% \\
$[0.9,1]^{\mathrm{T}}$ & \textbf{101.06} & 104.64 & 3.5\% \\
$[0.5,1]^{\mathrm{T}}$ & \textbf{27.73} & \textbf{27.73} & 0\% \\
$[1,0.9]^{\mathrm{T}}$ & \textbf{128.19} & 143.95 & 12.3\% \\
$[1,0.5]^{\mathrm{T}}$ & \textbf{134.83} & 150.33 & 11.4\% \\
\hline
  \end{tabular}
  }
\end{table*}
Consider the linear time invariant system in the kernel representation
\begin{equation}
  \Bsys \triangleq \{w \in \sig{4} | R(\sigma)w = 0 \}, \label{eq:example_system}
\end{equation}
where $R(\eta) \in \mat{3}{4}$ is defined as
\begin{equation}
  R(\eta) \triangleq \begin{bmatrix}
\eta - 0.8 & -0.1 & 0 & -0.1 \\
-1 & -0.5 & -0.2 & -1 \\
0 & 0 & \eta - 0.7 & 0
  \end{bmatrix}.
\end{equation}
Subsequently, we impose dynamical constraints of the form
 $\Bcons \triangleq \{w \in \sig{4} | H(\sigma)w \leq g \},$
where
\begin{equation}
  H(\eta) \triangleq \begin{bmatrix}
    1 & 1 & 0 & 0 \\
    0 & 0 & 1 & -\eta \\
    -1 & 0 & 0 & 1 \\
  \end{bmatrix} \in \mat{3}{4}, g \triangleq \begin{bmatrix}
    9 \\
    6 \\
    2
  \end{bmatrix}
.
\end{equation}
The initial conditions for the dynamical system are in the form
$  I(\sigma) w(0) = a,$
where
\begin{equation}
  I(\eta) \triangleq
  \begin{bmatrix}
    1 & 0 & 0 & 0 \\
    0 & 1 & 0 & 0 \\
  \end{bmatrix} \in \mat{2}{4}, a \triangleq \begin{bmatrix}
    1 \\
    1
  \end{bmatrix}.
\end{equation}
Let the objective functional be defined as \eqref{eq:cost_functional} with
$\phi(\xi,\eta) \triangleq \mathbb{I}_{4}$, i.e., the identity matrix of dimensions $4 \times 4$. The
system \eqref{eq:example_system} can be written in the image representation
using the operator
$M(\eta) = \begin{bmatrix}
0.05 &
\eta - 0.7&
0 &
-0.5 \eta + 0.3
\end{bmatrix}^{\mathrm{T}}$
such that $w(k) = M(\sigma)\ell(k) \forall  k \in \mathbb{Z}$.

Using \eqref{eq:stationarity} and \eqref{eq:complementarity}, we obtain
the solution for a particular value of the initial conditions, implemented in
Python using SageMath \cite{sagemath} and CVXpy \cite{cvxpy}. The trajectory of the
manifest variable $w(\cdot)$ for the constrained problem is shown in
Fig. \ref{fig:w_const}, along with the constraint adherence in
Fig. \ref{fig:constraint_adherance}, where no constraint bound is ever breached.
The optimal Lagrange multiplier trajectory is shown in Fig. \ref{fig:lambda}.
For comparison, Fig. \ref{fig:w_unc} shows the system trajectory for the
unconstrained problem; the trajectories in Figs. \ref{fig:w_const} and
\ref{fig:w_unc} differ owing to the constraints imposed on the former.

We also tabulate the cost accrued by the constrained and unconstrained optimal
controllers for different values of the initial condition parameter $a$ in
Table \ref{tab:cost_comparison}.

\section{Conclusions}%
\label{sec:conclusion}
In this paper we proposed a framework for constrained linear-quadratic optimal
control within the behavioral approach, treating system dynamics and
constraints as fundamental trajectory-level objects instead of relying on
state-space or input-output representations. Such a formulation allows a more
general treatment of complex systems where the distinction between inputs and
outputs may not be fixed. We provided necessary and sufficient conditions for
the feasibility of behavioral constraints and the optimality of the constrained
optimal problem. The numerical results validated the effectiveness of the
proposed method, showing how the optimal trajectories are modified when hitting
constraints compared to the unconstrained case. While the present method
solves the open loop problem, the closed loop and data-driven extensions of the
present work may be attempted in a future work.


\bibliographystyle{ieeetr}
\bibliography{sample}

\end{document}